\newcommand{\complex}{{\mathbb C}}
\newcommand{\C}{{\mathbb C}}
\newcommand{\R}{{\mathbb R}}
\newtheorem{theorem}{Theorem}
\newtheorem{corollary}[theorem]{Corollary}
\newtheorem*{remark*}{Remark}
\newtheorem*{example*}{Example}
\DeclareMathOperator{\offdiag}{offdiag}
\DeclareMathOperator{\diag}{diag}
\newcommand{\randdiag}{RandDiag}
\title{A simple, randomized algorithm \\ for diagonalizing  normal matrices\footnote{This work was supported by the SNSF research project \emph{Probabilistic methods for joint and
singular eigenvalue problems}, grant number: 200021L\_192049. }}
\date{\today}
\author{Haoze He\footnotemark[2] \and Daniel Kressner\footnote{\'Ecole Polytechnique F\'ed\'erale de Lausanne (EPFL), Institute of Mathematics, 1015 Lausanne, Switzerland. E-mails: \href{mailto:haoze.he@epfl.ch}{haoze.he@epfl.ch},  \href{mailto:daniel.kressner@epfl.ch}{daniel.kressner@epfl.ch}}}
\begin{document}

\maketitle

\begin{abstract}
 We present and analyze a simple numerical method that diagonalizes a complex normal matrix $A$ by diagonalizing the Hermitian matrix obtained from a random linear combination of the Hermitian and skew-Hermitian parts of $A$.
\end{abstract}

\section{Introduction} \label{sec:intro}

A matrix $A \in \C^{n\times n}$ is called normal if $AA^* = A^* A$, where $A^*$ denotes the Hermitian transpose of $A$. Equivalently, the Hermitian and skew-Hermitian parts of $A$ commute:
\begin{equation} \label{eq:commuteHSH}
 HS = SH, \quad H:= (A+A^*)/2, \quad S:= (A-A^*) /2.
\end{equation}
It is a basic linear algebra fact that $A$ is normal if and only if it can be diagonalized by a unitary matrix $U$, that is, $U^* A U$ is diagonal.

Our recent work~\cite{hekressner2024randomized} on diagonalizing commuting matrices suggests a simple method for performing the diagonalization of a general normal matrix $A$: Draw two independent random numbers $\mu_H, \mu_S \in \R$ from the standard normal distribution $\mathcal N(0,1)$ and compute a unitary matrix $U$ that diagonalizes the \emph{Hermitian} matrix $\mu_H H + \mu_S \mathrm{i} S$. This results in the following algorithm:
\begin{algorithm}[H]
    \caption*{\textbf{Rand}omized \textbf{Diag}onalization of a normal matrix  ({\bf \randdiag})}
    \textbf{Input:} \text{Normal matrix $A \in \complex^{n \times n }$.}\\
     \textbf{Output:} \text{Unitary matrix $U$ such that $U^* A U$ is diagonal.} \\[-0.5cm]
    \begin{algorithmic}[1]
    \label{alg:RanDNorm}
        \STATE Compute $H = (A + A^*) /2, S = (A - A^*) /2$.
        \STATE Draw $\mu_H, \mu_S \in \mathcal N(0,1)$ independently.
        \STATE \label{line:evp} Compute unitary matrix $U$ such that $U^*(\mu_H H + \mu_S \mathrm{i} S)U$ is diagonal.
    \end{algorithmic}
    \end{algorithm}
    
As we will explain in Section~\ref{sec:analysis}, \randdiag{} succeeds with probability one in exact arithmetic and remains fairly robust in the presence of errors introduced due to, e.g., roundoff.
The main computational cost is in line~\ref{line:evp}, which directly benefits from decades of research and development on Hermitian eigenvalue solvers~\cite{Bai2000,Golub2013,Stewart2001}
and software, such as LAPACK~\cite{lapack99}. This makes \randdiag{} very simple to implement in scientific computing software environments. For example, these are the Matlab and Python implementations of \randdiag{}:

\begin{Verbatim}[frame=single]
H = (A+A')/2; S = (A-A')/2;
[U,~] = eig(randn*H+randn*1i*S);
\end{Verbatim}

\begin{Verbatim}[frame=single]
H = (A+A.conj().T)/2; S = (A-A.conj().T)/2
_,U = eigh(np.random.normal()*H + np.random.normal()*1j*S)
\end{Verbatim}

Compared to the Hermitian case, the case of a general normal matrix $A$ is much less developed. In particular, we are not aware of a single publicly available implementation of an eigenvalue solver tailored to (complex) normal matrices, despite the fact that the development of such algorithms is a classical topic in numerical analysis.
Already in 1959, Goldstine and Horwitz~\cite{GoldstineHorwitz59} proposed and analyzed a Jacobi-like method 
that annihilates the off-diagonal part of $A$ by alternatingly applying Givens rotations. Ruhe~\cite{Ruhe67}
established local quadratic convergence of this method when applying Givens rotations in cyclic order and developed later on, in~\cite{Ruhe87}, a modified version for finding the normal matrix that is nearest (in Frobenius norm) to a given matrix. Since then, research on Jacobi methods for normal matrices has fallen nearly dormant, with the notable exception of~\cite{MR4777800}.
However, two closely related problems have continued to attract interest: (1) Several algorithms have been developed for the more general task of simultaneously diagonalizing (nearly) commuting Hermitian matrices, including
Jacobi-like methods~\cite{Bunse-Gerstner93,JADE}, optimization-based methods~\cite{uwedge,ffdiag}, a ``do-one-then-do-the-other'' (DODO) approach~\cite{Sutton23}, 
as well as randomized methods; see~\cite{hekressner2024randomized} and the references therein.
By applying them to the commuting Hermitian matrices $H, \mathrm{i}S$, any of these algorithms can be used to diagonalize a normal matrix $A = H + S$. Very recently~\cite{Mataigne2024}, the DODO approach was adapted to block diagonalize a \emph{real} normal matrix with a \emph{real} orthogonal matrix.
(2) When $A$ is unitary, its Hessenberg form can be described by $\mathcal O(n)$ parameters, the so called Schur parameters, which is the basis of efficient
algorithms~\cite{UnitaryDC,UnitaryQR} for diagonalizing $A$. Unitary eigenvalue problems play an important role in mathematical physics; a recent application to 
modelling thermal conductivity  can be found in~\cite{LongPhysRevLett23}. The unitary variant of the QR algorithm described in~\cite{UnitaryQR} has been implemented in the {\tt eiscor} Fortran 90 package.\footnote{See \url{https://github.com/eiscor/eiscor}.}

\randdiag{} is not only very simple but it is also fast, as demonstrated by our numerical experiments in Section~\ref{sec:numexp}. Experience~\cite{Drmac2007} has shown that significant efforts are needed to make Jacobi-like methods, like the one by Goldstine and Horwitz, competitive. Even for the special case of a unitary matrix $A$, already the initial reduction to Hessenberg form required by the unitary QR algorithm can be more costly than \randdiag{}.
Note, however, that not all applications may require this reduction; see~\cite{robel21} for an example.

\section{Analysis of \randdiag} \label{sec:analysis}


We start our analysis of \randdiag{} by showing that it almost surely returns the correct result in the absence of error.
\begin{theorem}
\label{thm:rjd_exact_recovery}
Given a normal matrix $A$, the unitary  matrix $U$ returned by \randdiag{} diagonalizes $A$ with probability $1$.
\end{theorem}
\begin{proof}
The result follows from Theorem 2.2 in~\cite{hekressner2024randomized}, which is concerned with the more general situation of diagonalizing a family of commuting matrices. For the convenience of the reader, we reproduce and simplify the argument for the situation at hand.

Because the matrices $H$ and $S$ defined in~\eqref{eq:commuteHSH} commute, there exists a unitary matrix $U_0$ jointly diagonalizing $H$ and $\mathrm i S$:
\begin{equation} \label{eq:jointdiag}
 U_0^* H U_0 = \Lambda_H, \quad \mathrm U_0^* \mathrm{i} S U_0 = \Lambda_S,
\end{equation}%
with real diagonal matrices $\Lambda_H = \text{diag}( \lambda_1^{(H)},\ldots,
\lambda_n^{(H)})$, $\Lambda_S = \text{diag}( \lambda_1^{(S)},\ldots,
\lambda_n^{(S)})$. Clearly, this implies $
 U_0^* (\mu_H H + \mu_S \mathrm i S) U_0 = \mu_H  \Lambda_H + \mu_S \Lambda_S.$
On the other hand, the matrix $U$ computed by \randdiag{} also diagonalizes the same matrix: $U^*( \mu_H H + \mu_S \mathrm i S) = \widetilde \Lambda$.
The diagonal matrix $\widetilde \Lambda$ contains the same eigenvalues as $\mu_H  \Lambda_H + \mu_S \Lambda_S$, but they may appear in a different order on the diagonal.
Hence, by reordering the diagonal entries of $\widetilde \Lambda$, there is a permutation matrix $\Pi$ such that
\[
\Pi^* U^* (\mu_H H + \mu_S \mathrm i S) U \Pi = \Pi^* \widetilde \Lambda \Pi = \mu_H  \Lambda_H + \mu_S \Lambda_S.
\]
Defining $V:= U_0^* U \Pi$, the two relations above imply
\[
 V^* (\mu_H \Lambda_H + \mu_S \Lambda_S) V = \mu_H \Lambda_H + \mu_S \Lambda_S.
\]
In other words, the unitary matrix $V$ commutes with the diagonal matrix $\mu_H \Lambda_H + \mu_S \Lambda_S$, which implies that
$v_{ij} = 0$ for any $i,j \in \{1,\ldots,n\}$ such that
\begin{equation} \label{eq:differenteigenvalues}
\mu_H  \lambda_i^{(H)} + \mu_S \lambda_i^{(S)} \not= \mu_H  \lambda_j^{(H)} + \mu_S \lambda_j^{(S)}.
\end{equation}
Now consider any $i,j$ such that $\lambda_i^{(H)} \not= \lambda_j^{(H)}$ or $\lambda_i^{(S)} \not= \lambda_j^{(S)}$.
Because  $\mu_H, \mu_S$ are independent continuous random variables, the relation
$\mu_H  (\lambda_i^{(H)} - \lambda_j^{(H)}) + \mu_S (\lambda_i^{(S)} - \lambda_j^{(S)}) = 0$
is satisfied with probability zero; see, e.g., \cite{hekressner2024randomized}[Lemma 2.1]. Therefore, with probability $1$, the following hold: The relation~\eqref{eq:differenteigenvalues} and, thus, $v_{ij} = 0$ are satisfied for all such $i,j$. In turn, $V$ commutes with $\Lambda_H$ and with $\Lambda_S$. Multiplying the two relations~\eqref{eq:jointdiag} from the left with $V^*$ and from the right with $V$, we get
$U^* H U = \Lambda_H$ and $\mathrm U^* \mathrm{i} S U = \Lambda_S$.
In particular, $U$ diagonalizes $A = H+S$.\end{proof}

The proof of Theorem~\ref{thm:rjd_exact_recovery} reveals why randomness is needed in \randdiag{}.
Suppose that, instead of choosing $\mu_H,\mu_S$ randomly, we hard-coded a choice $\mu_H = \alpha$, $\mu_S = \beta$ for arbitrary fixed $\alpha, \beta \in \mathbb R$
in \randdiag{}. Then \randdiag{} would fail for the matrix
\[
 A = U_0 \begin{bmatrix}
      \beta + \mathrm{i} \alpha  & 0 \\
      0 & 0
     \end{bmatrix} U_0^*,
\]
with some unitary matrix $U_0 \not= I$. In this case, $\mu_H H + \mu_S \mathrm i S = 0$ and $U=I$ would be a viable (and likely) output of \randdiag{}. But, clearly, $U = I$ does not diagonalize $A$. 

When working in finite-precision arithmetic, the assumption of Theorem~\ref{thm:rjd_exact_recovery}, that $A$ is exactly normal, is not reasonable, unless $A$ has a particularly simple structure, like Hermitian or diagonal. Already representing the entries of a, say, unitary matrix as floating point numbers introduces an error (on the order of the unit round-off $u$) that destroys normality. The Hermitian eigenvalue solver utilized in line~\ref{line:evp} of \randdiag{} introduces additional error. If a backward stable method is used in line~\ref{line:evp}, one can conclude that \randdiag{} is executed for a slightly perturbed matrix $A+E$, with $\|E\|_F = \mathcal O( u \|A\|_F)$. However, it is unlikely that $E$ preserves normality; the best one can reasonably hope for in that situation is that the transformation returned by \randdiag{} diagonalizes $A$ up to an off-diagonal error proportional to $\|E\|_F$. In the following, we indeed establish such a robustness property by utilizing
results on randomized joint diagonalization from~\cite{hekressner2024randomized}. Note that $\offdiag(\cdot)$ refers to the off-diagonal part of a matrix, obtained by setting its diagonal entries to zero.
\begin{theorem}
\label{thm:robust-pair}
Consider $n\times n$ Hermitian matrices $A_1, A_2, \tilde A_1 = A_1 + E_1$, $\tilde A_2 = A_2 + E_2$ such that 
$A_1 A_2 = A_2 A_1$.
Let $\tilde U$ be a unitary matrix that diagonalizes $\mu_1 \tilde A_1 + \mu_2 \tilde A_2$ for independent $\mu_1,\mu_2 \sim \mathcal N(0,1)$.
Then, for any $R > 1$, the inequality
\[\big( \big\|\offdiag(\tilde{U}^T\tilde{A}_1\tilde{U})\big\|_F^2 + \big\|\offdiag(\tilde{U}^T\tilde{A}_2\tilde{U})\big\|_F^2 \big)^{1/2} \leq R \big( \|E_1\|_F^2 + \|E_2\|_F^2 \big)^{1/2}   \]
holds with probability at least $1-\frac{12}{\sqrt{\pi}}  \frac{n^{3.5}}{R-1}$.
\end{theorem}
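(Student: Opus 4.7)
The plan is to derive the bound from a perturbation analysis of the Hermitian eigenvalue problem for $\tilde M := \mu_1 \tilde A_1 + \mu_2 \tilde A_2$, viewed as a small perturbation of $M := \mu_1 A_1 + \mu_2 A_2$ by $\Delta := \mu_1 E_1 + \mu_2 E_2$. This is the two-matrix specialization of the general robustness bound for randomized joint diagonalization established in~\cite{hekressner2024randomized}, and I would reproduce its main ingredients in the present setting. Since $A_1 A_2 = A_2 A_1$, there is a unitary $U$ jointly diagonalizing $A_1$ and $A_2$, and hence also $M$, with joint eigenvalues $(\lambda_i^{(1)}, \lambda_i^{(2)})$ and $\lambda_i(M) = \mu_1 \lambda_i^{(1)} + \mu_2 \lambda_i^{(2)}$. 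Writing $\tilde U = UW$ then reduces the problem to diagonalizing the diagonal-plus-perturbation matrix $D + U^*\Delta U$ and controlling how the unitary $W$, which is close to a permutation matrix, distorts the diagonal matrices $U^* A_m U$.

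Next, a first-order (Rayleigh--Schr\"odinger) analysis gives $W_{kl} \approx (U^*\Delta U)_{kl}/(\lambda_l(M) - \lambda_k(M))$ for $k \ne l$, and a short computation using $U^* A_m U = \diag(\lambda_i^{(m)})$ yields
\[
\bigl(W^*(U^*A_m U)W\bigr)_{kl} \;\approx\; \frac{(\lambda_k^{(m)} - \lambda_l^{(m)})\,(U^*\Delta U)_{kl}}{\lambda_l(M) - \lambda_k(M)}, \qquad k \ne l.
\]
Squaring and summing over $m \in \{1,2\}$ produces the decisive ratio $(a_{kl}^2+b_{kl}^2)/(\mu_1 a_{kl}+\mu_2 b_{kl})^2$ with $a_{kl} := \lambda_k^{(1)}-\lambda_l^{(1)}$ and $b_{kl} := \lambda_k^{(2)}-\lambda_l^{(2)}$, while the direct term $\tilde U^* E_m \tilde U$ contributes at most $\|E_m\|_F$ to the Frobenius off-diagonal.

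The probabilistic step exploits that $\mu_1 a_{kl} + \mu_2 b_{kl} \sim \mathcal N(0,\, a_{kl}^2 + b_{kl}^2)$ for each pair with $(a_{kl}, b_{kl}) \ne (0,0)$. The folded-normal estimate $\mathbb P(|Z| \le t) \le \sqrt{2/\pi}\, t$ for $Z \sim \mathcal N(0,1)$ directly bounds the random ratio above; a union bound over the $O(n^2)$ pairs with distinct joint eigenvalues, together with an additional $\sqrt n$ factor to convert entrywise into Frobenius-norm control, then accounts for the $n^{3.5}$ in the stated probability, while the folded-normal tail constant is responsible for the prefactor $12/\sqrt\pi$.

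The main obstacle will be to make the first-order expansion quantitative rather than heuristic: the approximation for $W_{kl}$ is only correct to leading order, and the rigorous argument requires $\sin\Theta$-type Davis--Kahan bounds together with a Weyl-inequality correction so that the eigengap event, which is naturally formulated in terms of $M$, can be transferred to $\tilde M$ (whose eigenvectors actually appear in the statement); this transfer is what replaces $R$ by $R-1$ in the denominator of the failure probability. Repeated joint eigenvalues, whose off-diagonal $W$-blocks are free but contribute nothing to $W^*(U^* A_m U) W$ within a cluster, require a routine additional argument. Both points are treated in full generality in~\cite{hekressner2024randomized}, whose main theorem specializes directly to the statement above.
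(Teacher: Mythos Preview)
Your proposal is correct and follows essentially the same route as the paper: both reduce to the robustness theorem for randomized joint diagonalization in~\cite{hekressner2024randomized} specialized to $d=2$, with the Davis--Kahan/invariant-subspace bound you identify playing the role of Lemma~3.2 there. The paper's proof is simply this citation together with the remark that the real-symmetric argument extends verbatim to the complex Hermitian case; your sketch fills in the mechanics of that argument but does not deviate from it.
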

\begin{proof}
The result follows directly from Theorem 3.6 in~\cite{hekressner2024randomized} for $d=2$ matrices, after a straightforward extension from the real symmetric to the complex Hermitian case. In particular, it can be easily verified that the invariant subspace perturbation bound from Lemma 3.2 in~\cite{hekressner2024randomized}, which plays a crucial role in the proof, continues to hold in the complex case. 
\end{proof}

\begin{corollary}[Robustness of \randdiag{} to error]\label{cor:robust_recovery}
Given a matrix $\tilde A = A + E \in \C^{n\times n}$, such that $A$ is normal, let $\tilde U$ denote the unitary matrix returned by \randdiag{} when applied to $\tilde A$. Then, for any $R > 1$, the inequality
    \[\|\offdiag(\tilde U^*\tilde A \tilde U)\|_F \leq R \|E\|_F \]
holds with probability at least $1 - \frac{12}{\sqrt{\pi}}  \frac{n^{3.5}}{R-1}$.
\end{corollary}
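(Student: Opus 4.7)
The plan is to reduce directly to Theorem~\ref{thm:robust-pair} by setting $A_1 := H = (A+A^*)/2$ and $A_2 := \mathrm i S = \mathrm i(A-A^*)/2$, with the natural perturbations $\tilde A_1 := (\tilde A+\tilde A^*)/2$ and $\tilde A_2 := \mathrm i(\tilde A-\tilde A^*)/2$. The error matrices are then $E_1 = (E+E^*)/2$ and $E_2 = \mathrm i(E-E^*)/2$; both $\tilde A_1, \tilde A_2$ are Hermitian, $A_1 A_2 = A_2 A_1$ follows from $A$ being normal, and $\tilde U$ diagonalizes $\mu_H \tilde A_1 + \mu_S \tilde A_2$ by the construction of \randdiag{}. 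Hence Theorem~\ref{thm:robust-pair} applies verbatim.

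Two short arithmetic identities then do the work. First, since the Hermitian and skew-Hermitian parts of any matrix are orthogonal in the Frobenius inner product, I would quickly verify $\|E_1\|_F^2 + \|E_2\|_F^2 = \tfrac14(\|E+E^*\|_F^2 + \|E-E^*\|_F^2) = \|E\|_F^2$. Second, and this is the one step that carries real content, I would show that the same orthogonality holds entry-wise off the diagonal: setting $M := \tilde U^* \tilde A \tilde U$ and decomposing $M = M_H + M_S$ into Hermitian and skew-Hermitian parts, the relations $(M_H)_{ij} = (M_{ij}+\overline{M_{ji}})/2$ and $(M_S)_{ij} = (M_{ij}-\overline{M_{ji}})/2$ give $|(M_H)_{ij}|^2 + |(M_S)_{ij}|^2 = \tfrac12(|M_{ij}|^2+|M_{ji}|^2)$. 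Summing over $i\neq j$, and using that multiplication by $\mathrm i$ preserves the Frobenius norm (so $\|\offdiag(\tilde U^* \tilde A_2 \tilde U)\|_F = \|\offdiag(M_S)\|_F$), yields
\[ \|\offdiag(\tilde U^* \tilde A \tilde U)\|_F^2 = \|\offdiag(\tilde U^* \tilde A_1 \tilde U)\|_F^2 + \|\offdiag(\tilde U^* \tilde A_2 \tilde U)\|_F^2. \]

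Combining these two identities with Theorem~\ref{thm:robust-pair} delivers $\|\offdiag(\tilde U^* \tilde A \tilde U)\|_F \leq R\|E\|_F$ with probability at least $1 - 12 n^{3.5}/(\sqrt{\pi}(R-1))$, matching the claim. There is no serious obstacle: the only non-routine point is the second identity, which is what guarantees the clean passage from the two-matrix joint-diagonalization bound to the single-matrix bound with no loss in constants.
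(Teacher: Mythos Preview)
Your proposal is correct and follows essentially the same route as the paper: you reduce to Theorem~\ref{thm:robust-pair} with $A_1=H$, $A_2=\mathrm iS$, $E_1=(E+E^*)/2$, $E_2=\mathrm i(E-E^*)/2$, and then use the Frobenius-orthogonality of Hermitian and skew-Hermitian parts (which survives the passage to off-diagonal parts) to collapse the two-matrix bound into the single-matrix one. The only cosmetic difference is that you verify the off-diagonal Pythagorean identity entrywise, whereas the paper invokes the orthogonality in one line.
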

\begin{proof}
In analogy to the decomposition $A = H+S$ from~\eqref{eq:commuteHSH}, we decompose $\tilde A = \tilde H + \tilde S$ with
\[\tilde H = H +  E_H, \quad \tilde S = S + E_S, \quad E_H = (E+E^*)/2, \quad E_S = (E-E^*)/2. \]
Because Hermitian and skew-Hermitian matrices are orthogonal to each other (in the Frobenius inner product) and taking off-diagonal parts does not affect this property, it follows that
\begin{align*}
        \|\offdiag(\tilde U^*\tilde A \tilde U)\|_F^2 =&\|\offdiag(\tilde U^*\tilde H \tilde U)  + \offdiag(\tilde U^*\tilde S \tilde U) \|^2_F\\
        =& \|\offdiag(\tilde U^*\tilde H \tilde U) \|_F^2 +\|\offdiag(\tilde U^* \mathrm{i} \tilde S\tilde U)\|_F^2,
\end{align*}
as well as $\|E\|_F^2  = \|E_H\|_F^2 + \|\mathrm{i} E_S\|_F^2$.  The result of the corollary follows from Theorem~\ref{thm:robust-pair} by setting $A_1 = H$, $A_2 = \mathrm{i}S$, $E_1 = E_H$, and $E_2 = \mathrm{i} E_S$.
%
\end{proof}

The result of Corollary~\ref{cor:robust_recovery} guarantees, with high probability, an off-diagonal error proportional to the input error, if one allows for a magnification of the error by a factor that grows polynomially with $n$. Note that Corollary~\ref{cor:robust_recovery} also implies a backward error result because $\tilde U$ diagonalizes the normal matrix $\tilde A - \tilde U\offdiag(\tilde U^*\tilde A\tilde U)\tilde U^*$. In turn, this allows one to apply perturbation results, such as the Hoffmann-Wielandt theorem~\cite[Thm. IV.3.1]{Stewart1990}, to draw conclusions on, e.g., the quality of the approximate eigenvalues obtained from the diagonal of $\tilde U^*\tilde A \tilde U$.

Note that, both, Theorem~\ref{thm:robust-pair} and Corollay~\ref{cor:robust_recovery} make the (simplifying) assumption that the random variables involved are not affected by perturbations.

\section{Numerical experiments} \label{sec:numexp}

In this section, we present a few numerical experiments to indicate the efficiency of \randdiag{}. With no (efficient) implementation of the Jacobi-like methods mentioned in Section~\ref{sec:intro} being available, the main competitor to \randdiag{} appears to be the (non-symmetric) QR algorithm~\cite{Golub2013} to compute the unitary matrix that transforms $A$ into Schur form. Thanks to the backward stability of the QR algorithm, such an approach enjoys a strong robustness property in the sense of Corollary~\ref{cor:robust_recovery}.

All experiments were carried out on a Dell XPS 13 2-In-1 with an Intel Core i7-1165G7 CPU and 16GB of RAM. We have implemented\footnote{Our implementation as well as all files needed to reproduce the experiments can be found at \url{https://github.com/haoze12345/Diagonalizing-Normal-Matrices}.} \randdiag{} in Python $3.8$, calling LAPACK routines via Scipy for computing spectral and Schur decompositions. All experiments have been carried out in double precision and all execution times/errors are averaged over $100$ runs with fixed input matrices.    For the computed matrix $\tilde U$, we report the off-diagonal error $\|\offdiag(\tilde U^*A \tilde U)\|_F$.

\subsection{Synthetic data}

We have tested \randdiag{} on random complex unitary matrices obtained from applying the QR factorization to $n\times n$ complex Gaussian random matrices. 
The obtained results are summarized in Table~\ref{table:synthetic1_500} to Table~\ref{table:synthetic1_1500}.%


\begin{table}[!hbt!]
\begin{center}
\caption{Execution time and off-diagonal error for \randdiag{} vs. Schur decomposition applied to a random unitary matrix with $n=500$.}

\label{table:synthetic1_500}
\small
\begin{tabular}{|c|c|S[table-format=2.3]|S[table-format=2.3]|S[table-format=2.3]|S[table-format=2.3]|}
\hline
&{time} & {Error mean} & {Error std} & {Error min} & {Error max} \\
\hline
RandDiag & {0.16s}& $\num{1.42e-10}$ & $\num{2.56e-10}$ & $\num{6.13e-12}$ & $\num{1.58e-09}$ \\
Schur & {0.64s} &$\num{2.58e-13}$ & $\num{ 5.05e-29}$& $\num{2.58e-13}$ & $\num{2.58e-13}$\\
\hline
\end{tabular}
\end{center}
\end{table}

\begin{table}[!hbt!]
\begin{center}
\caption{Execution time and off-diagonal error for \randdiag{} vs. Schur decomposition applied to a random unitary matrix  with $n=1000$.}

\label{table:synthetic1_1000}
\small
\begin{tabular}{|c|c|S[table-format=2.3]|S[table-format=2.3]|S[table-format=2.3]|S[table-format=2.3]|}
\hline
&{time} & {Error mean} & {Error std} & {Error min} & {Error max} \\
\hline
RandDiag &{0.58s} & $\num{7.88e-10}$ & $\num{3.60e-09}$ & $\num{ 2.54e-11}$ & $\num{3.54e-08}$ \\
Schur & {2.47s} &$\num{4.84e-13}$ & $\num{0}$& $\num{4.84e-13}$ & $\num{4.84e-13}$\\
\hline
\end{tabular}
\end{center}
\end{table}

\begin{table}[!hbt!]
\begin{center}
\caption{Execution time and off-diagonal error for \randdiag{} vs. Schur decomposition applied to a random unitary  matrix with $n=1500$.}

\label{table:synthetic1_1500}
\small
\begin{tabular}{|c|c|S[table-format=2.3]|S[table-format=2.3]|S[table-format=2.3]|S[table-format=2.3]|}
\hline
&{time} & {Error mean} & {Error std} & {Error min} & {Error max} \\
\hline
RandDiag & {1.45s} & $\num{1.24e-09}$ & $\num{3.92e-09}$ & $\num{9.64e-11}$ & $\num{3.62e-08}$ \\
Schur & {6.50s} &$\num{6.72e-13}$ & $\num{ 0}$& $\num{6.72e-13}$ & $\num{6.72e-13}$\\
\hline
\end{tabular}
\end{center}
\end{table}
It turns out that \randdiag{} is up to four times faster than the standard Schur decomposition, at the expense of a few digits (on average three, at most five)
of accuracy in the off-diagonal error. Note that this does not necessarily translate into reduced eigenvalue accuracy. Indeed, for the matrices from Table~\ref{table:synthetic1_500} to Table~\ref{table:synthetic1_1500}, extracting the eigenvalues from the diagonal of $U^* A U$ for the matrix $U$ returned by \randdiag{} results in accuracy comparable to the Schur decomposition. In analogy to results for Hermitian matrices (see~\cite{Nakatsukasa2017} and the references therein), the eigenvalue error appears to depend quadratically on the off-diagonal error for well separated eigenvalues.

To further analyze the accuracy of eigenvalues returned by \randdiag{},  we compute relative errors of the output eigenvalues for randomly generated normal matrices $A \in \complex^{n \times n}$ of the form $U D U ^*$. The unitary matrix $U$ is obtained as in the last experiment above, and $D$ is diagonal with diagonal entries sampled from standard complex Gaussian. The relative errors are computed as follows. Let $\diag(\cdot)$ denote the vector of diagonal entries of a matrix and $\tilde U$ be the output of \randdiag{} applied to $A$. Define $d_1 = \diag(D) \in \mathbb{C}^n$ as the vector of original eigenvalues and $d_2 = \diag(\tilde U ^* A \tilde U) \in \mathbb{C}^n$ as the vector of the output eigenvalues. The relative error is given by: 
$$\|d_1 - P d_2\|_2 / \|d_1\|_2$$ where $P$ is the the permutation matrix minimizing $\|d_1 - P d_2\|_2$. This optimal permutation is  computed by solving a simple linear sum assignment problem, for which we leverage the implementation in {\tt Scipy}~\footnote{
\url{https://docs.scipy.org/doc/scipy/reference/generated/scipy.optimize.linear_sum_assignment.html}
}. The results, summarized in Table \ref{table:eigenvalue_error500} to Table \ref{table:eigenvalue_error1500}, show that \randdiag{} tends to be slightly more accurate than the Schur decomposition for this data, whereas the Schur decomposition yields more consistent results.
\begin{table}[!hbt!]
\begin{center}
\caption{Eigenvalue relative errors for \randdiag{} vs. Schur decomposition for a random normal matrix with $n=500$.}

\label{table:eigenvalue_error500}
\small
\begin{tabular}{|c|S[table-format=2.3]|S[table-format=2.3]|S[table-format=2.3]|S[table-format=2.3]|}
\hline
&{error mean} & {error std} & {error min} & {error max} \\
\hline
RandDiag & $\num{1.12e-15}$ & $\num{3.30e-17}$ & $\num{1.06e-15}$ & $\num{1.22e-15}$ \\
Schur & $\num{4.74e-15}$ & $\num{ 7.89e-31}$& $\num{4.74e-15}$ & $\num{4.74e-15}$\\
\hline
\end{tabular}
\end{center}
\end{table}

\begin{table}[!hbt!]
\begin{center}
\caption{Eigenvalue relative errors for \randdiag{} vs. Schur decomposition for a random normal matrix with $n=1000$.}

\label{table:eigenvalue_error1000}
\small
\begin{tabular}{|c|S[table-format=2.3]|S[table-format=2.3]|S[table-format=2.3]|S[table-format=2.3]|}
\hline
&{error mean} & {error std} & {error min} & { error max} \\
\hline
RandDiag & $\num{1.57e-15}$ & $\num{3.96e-17}$ & $\num{1.47e-15}$ & $\num{1.67e-15}$ \\
Schur & $\num{5.91e-15}$ & $\num{ 3.16e-30}$& $\num{5.91e-15}$ & $\num{5.91e-15}$\\
\hline
\end{tabular}
\end{center}
\end{table}

\begin{table}[!hbt!]
\begin{center}
\caption{Eigenvalue relative errors for \randdiag{} vs. Schur decomposition for a random normal matrix with $n=1500$.}

\label{table:eigenvalue_error1500}
\small
\begin{tabular}{|c|S[table-format=2.3]|S[table-format=2.3]|S[table-format=2.3]|S[table-format=2.3]|}
\hline
&{error mean} & {error std} & {error min} & {error max} \\
\hline
RandDiag & $\num{1.49e-15}$ & $\num{2.56e-17}$ & $\num{1.41e-15}$ & $\num{1.55e-15}$ \\
Schur & $\num{6.61e-15}$ & $\num{ 1.58e-30}$& $\num{6.61e-15}$ & $\num{6.61e-15}$\\
\hline
\end{tabular}
\end{center}
\end{table}

For unitary matrices, using a structure-preserving QR algorithm~\cite{UnitaryQR} reduces the time needed to compute the Schur decomposition. However, it is unlikely that even a well-tuned implementation will significantly outperform \randdiag{}, because of the required initial reduction to Hessenberg form. For $n = 1000$, Hessenberg reduction requires $0.44s$ in Python. In Matlab, we found that Hessenberg reduction requires $0.31$s but RandDiag only requires $0.21$s. 

Next, we examine the growth of error with respect to matrix size for \randdiag{}. Figure \ref{fig:error_growth_rate} illustrates the average off-diagonal errors over $100$ runs of randomly generated unitary matrices with sizes ranging from $8$ to $4096$, in powers of 2. Note that the plot uses a log-log scale. Empirically, the  growth rate is closer to $\mathcal{O}(n^2)$, indicating that the $\mathcal{O}(n^{3.5})$ bound in Corollary \ref{cor:robust_recovery} is too pessimistic. Tightening this $\mathcal{O}(n^{3.5})$ bound remains an open question for future research.
\begin{figure}[h]
    \centering
    \includegraphics[width=\linewidth]{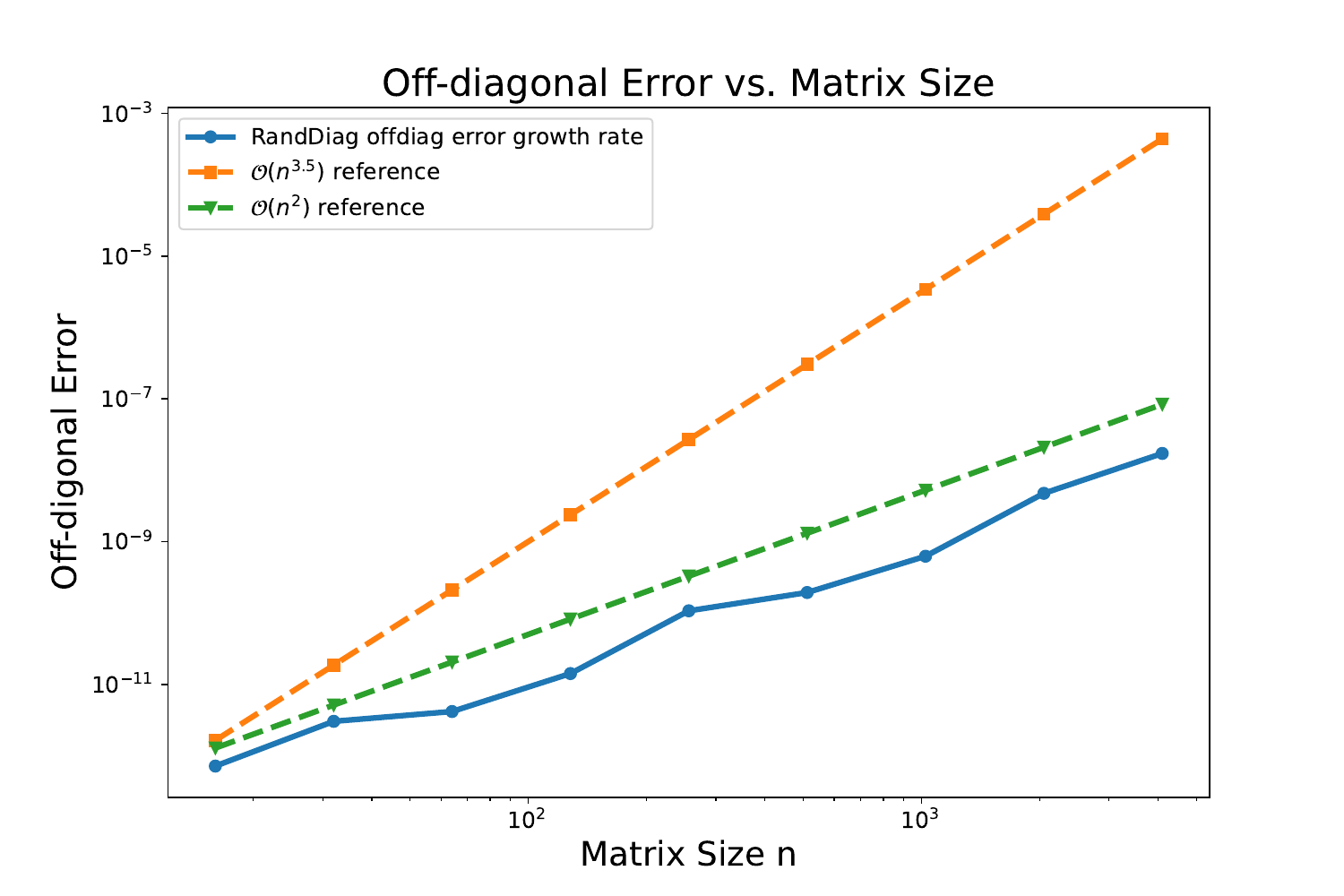}
    \caption{Error growth rate of RandDiag vs. matrix size on log-log scale}
    \label{fig:error_growth_rate}
\end{figure}

\subsection{Thermal conductivity estimation}

A thermal conductivity model from~\cite{LongPhysRevLett23} features unitary matrices of the form
\[U_F = U_{\text{int}}U_0, \quad U_0 = \bigotimes_{j=1}^{L}d_j, \quad U_{\text{int}} = \prod_{j=1}^{L-1} I_{2^{{\pi(j)}-1}} \otimes u_{\pi(j),\pi(j)+1} \otimes I_{2^{L-{\pi(j)}-1}}\]
where $\otimes$ denotes the usual Kronecker product, $L$ corresponds to the number of states, each $d_j$ is a (uniformly distributed) random $2 \times 2$ unitary matrix, and $u_{j,j+1} = \exp[\mathrm{i} M_{j,j+1}]$ where $M_{j,j+1}$ is a random matrix drawn from the $4 \times 4$ Gaussian unitary ensemble (GUE) normalized such that $\mathbb{E}[\text{trace}(M^2)] = 2$. 
Note that $U_{\text{int}}$ describes the nearest neighbor interaction between states. The result of \randdiag{} applied to $U_F$ for $L = 11$ (that is, $n = 2^{11}=2048$) is shown in Table \ref{table:real}. This time, we observe a speedup by nearly a factor of $6$, at the expense of more than $4$ digits of accuracy in the off-diagonal error. The level of accuracy attained by \randdiag{} usually suffices for this kind of applications.



\begin{table}[!hbt!]
\begin{center}
\caption{Execution time and off-diagonal error for \randdiag{} vs. Schur decomposition applied to thermal conductivity model}

\label{table:real}
\small
\begin{tabular}{|c|S[table-format=2.3]c|S[table-format=2.3]c|S[table-format=2.3]c|}
\hline
 & \multicolumn{1}{c}{time} & \multicolumn{1}{c|}{error} \\ 
\hline
RandDiag & {\space3.49s} & $\num{1.26e-09}$\\

Schur & {20.73s} & $\ \num{9.24e-13}$\\
\hline
\end{tabular}
\end{center}
\end{table}

\begin{paragraph}{Acknowledgments.} The authors thank Vjeran Hari, Zagreb, for suggesting the topic of this work.
\end{paragraph}

\section*{Declarations}
\subsection*{Competing interests}
The authors have no competing interests to declare that are relevant to the content of this article.
\subsection*{Data availability}
All the data is available at \url{https://github.com/haoze12345/Diagonalizing-Normal-Matrices}.

\bibliography{normal_bib}
\bibliographystyle{abbrv}
\end{document}